\definecolor{c0000FF}{RGB}{0,0,255}
\def\f#1#2{\mathfrak{Fr}_{#1}#2}
\def\t#1#2{\mathfrak{Tm}_{#1}#2}
\def\mod#1#2{(\mathfrak{Cm}(S^{\tau,K}),e^{\tau},#1)\models #2}
\def\a#1{\mathfrak{#1}}
\def\qut#1{``#1''}
\title{Weak G\"odel's incompleteness property  for some decidable versions of first order logic}
\author{Mohamed Khaled\thanks{rutmohamed@yahoo.com}}
\affil{Department of Mathematics and its applications, Central European University, Budapest, Hungary}
\affil{Department of Mathematics, Faculty of Science, Cairo University, Giza, Egypt}
\date{}
\newtheorem{definition}{Defintion}[section]
\newtheorem{theorem}{Theorem}[section]
\newtheorem{lemma}{Lemma}[section]
\begin{document}
\maketitle
\begin{abstract}
The founding of the theory of cylindric algebras, by Alfred Tarski, was a conscious effort to create algebras out of first order predicate calculus. Let $n\in\omega$. The classes of non-commutative cylindric algebras ($NCA_n$)
and weakened cylindric algebras ($WCA_n$) were shown, by Istv\'an N\'emeti, to be examples of decidable versions of first order logic with $n$ variables. In this article, we give new proofs for the decidability of the equational theories of these classes. We also give an answer to the open problem, posed by N\'emeti in 1985, addressing the atomicity of the finitely generated free algebras of these classes. We prove that all the finitely generated free algebras of the  varieties $NCA_n$ and $WCA_n$ are not atomic. In other words, we prove that the corresponding versions of first order logic have weak G\"odel's incompleteness property.
\end{abstract}
\section{Introduction}
The process which is called algebraization of logics started by the English mathematician George Boole.
Indeed, he introduced and started to investigate the class of Boolean algebras which is directly related to the
development of classical propositional logic. After then a continuous flow of steps in this direction has been
accomplished. The calculus of relations was created and developed by De Morgan, Peirce and Schr\"oder as a result of
the continues efforts searching for \qut{good general algebra of logic}. These efforts took place decades before the
emergence of first order calculus. The early notation for quantifiers originates with Peirce. The original version of
the L\"owenheim-Skolem theorem, c.f. \cite{low15}, is not a theorem about first order logic but about the calculus of
relations. Thus, first order predicate calculus has its origins in algebraic logic.

Algebraic logic is concerned with the ways of algebraizing logics and with the ways of investigating the algebras of
logics. The framework of algebraic logic is universal algebra. Universal algebra is the field which investigates
classes of algebras in general, interconnections, fundamental properties and so on. In other words, universal algebra
is a unifying framework that can provide a plan for investigating certain properties of the algebras of logics.
As in Boolean algebras, the algebras of logics often provide very general kind of geometry associated with basic
set-theoretic notions. Therefore, in algebraic logic one also have the advantage of illustrating the different
concepts which gives better understanding in a sense. Here, we are interested in some variations of cylindric
algebras. Cylindric algebras were introduced by Alfred Tarski, around 1950, as the algebras of first order predicate
calculus.

One of the main interests in the filed of algebraic logic is to the study the variants of the algebras of logics.
The variants of cylindric algebras correspond to fragments of first order logic. We are interested in two variants of
cylindric algebras: non-commutative cylindric algebras and weakened cylindric algebras. The class of non-commutative
cylindric algebras was first introduced and investigated by Richard Thompson. The class of weakened cylindric algebras
was introduced by Istv\'an N\'emeti. In \cite{nem86}, N\'emeti showed that these classes have decidable equational
theories. These algebras correspond to fragments of first order logic in which quantifiers don't need to permute.
Thus, N\'emeti showed that it is the permutability of quantifiers which is responsible for the undecidability of
first order logic. We note that N\'emeti's results answered some question that came years later. Indeed,
Johan van Benthem in 1994 \cite{vB95} asked the question \qut{What would have to be weakened in standard predicate
logic to get a decidable version}.

We concentrate on the finite dimensional algebras. Let $n\geq 2$ be finite. We give new proofs for the decidability of
the equational theories of the classes of non commutative ($NCA_n$) and weakened ($WCA_n$) cylindric algebras of
dimension $n$. We also show that these classes are generated by their finite members in the sense that their equational
theories coincide with the equational theories of their finite members. Furthermore, we study the atomicity of the
finitely generated free algebras of these classes. The atomicity problem of these free algebras goes back to 1985
when Istvan N\'emeti posed it in his Academic Doctoral Dissertation \cite{nem86}. In 1991, N\'emeti posed the same
problem again in \cite[Problem 38]{al}. This problem was posed again as an open problem in 2013 in the most recent
book in algebraic logic \cite[Problem 1.3.3]{ca4}. We prove that all the  finitely generated free algebras of the
classes $NCA_n$ and $WCA_n$ are not atomic. The non-atomicity of the free algebras of logics is equivalent to
weak G\"odel's incompleteness property of the corresponding logic. See \cite[proposition8]{prenem85} and \cite{gyn}.
\newpage
\section{Non-commutative variants of cylindric algebras}
Fix a finite number $n\geq 2$. The algebraic type $cyl_n$ has constant symbols $0,1,d_{ij}$ ($i,j\in n$), unary function symbols $-,c_i$ ($i\in n$) and binary function symbols $\cdot,+$. Denote the class of algebras of type $cyl_{n}$ by $CTA_{n}$. Define the following sets of equations in the type $cyl_{n}$. \begin{eqnarray*}
C_0 &=& \{\text{The equations characterizing Boolean algebras for $+,\cdot,-,0,1$}\}.\\
C_1 &=& \{c_i0=0 : i\in n\}.\\
C_2 &=& \{x\leq c_ix : i\in n\}.\\
C_3 &=& \{c_i(x\cdot c_iy)=c_ix\cdot c_iy : i\in n\}.\\
C_4 &=& \{c_ic_jx=c_jc_ix : i,j\in n\}.\\
C_5 &=& \{d_{ii}=1 : i\in n\}.\\
C_6 &=& \{d_{ij}=c_k(d_{ik}\cdot d_{kj}): i,j,k\in n, k\not\in\{i,j\}\}.\\
WC_6 &=& \{d_{ik}\cdot d_{kj}\leq d_{ij}=d_{ji}=c_kd_{ji}:i,j,k\in n,k\not\in\{i,j\}\}.\\
C_7 &=& \{c_i(d_{ij}\cdot x)\cdot c_i(d_{ij}\cdot -x)=0 : i,j\in n, i\not=j\}.
\end{eqnarray*}
The class of cylindric algebras of dimension $n$ is defined to be the subclass of $CTA_n$ characterized by the above equations (Note that $C_6$ is stronger than $WC_6$).
$$CA_n=\{\a{A}\in CTA_n:\a{A}\models\{C_0,C_1,C_2,C_3,C_4,C_5,C_6,C_7\}\}.$$
The class of non commutative cylindric algebras, of dimension $n$, is defined by realizing the commutativity axioms.
$$NCA_n=\{\a{A}\in CTA_n:\a{A}\models\{C_0,C_1,C_2,C_3,C_5,C_6,C_7\}\}.$$
The class of weakened cylindric algebras, of dimension $n$, has the same characterization of the class $NCA_n$ except that $C_6$ is replaced by the weaker version $WC_6$.
$$WCA_n=\{\a{A}\in CTA_n:\a{A}\models\{C_0,C_1,C_2,C_3,C_5,C_7\}\cup\{WC_6\}\}.$$

In \cite{nem86} ( and also in \cite{nem95}), it was shown that the classes $NCA_n$ and $WCA_n$ are generated by their finite members and
that they have decidable equational theories. Then Nemeti asked whether their finitely generated free algebras
are atomic or not. This problem presents some difficulties from an algebraic point of view.
In \cite{ca1}, Henkin proved that the finitely generated free algebras of the class $CA_2$ are atomic.
Hajnal Andr\'eka and Istv\'an N\'emeti figured out that Henkin's proof depends on the fact that $CA_2$ is a
discriminator variety that is generated by its finite members. But, however they are generated by their finite members,
N\'emeti has proved that non of the classes $NCA_n$ and $WCA_n$ is a discriminator variety.

Let $K\in\{NCA_n, WCA_n\}$. We give new proof for the fact that the equational theory of $K$ is the same as the equational theory of its finite members. Toward that, we construct for every satisfiable term a finite structure (in $K$) that witnesses the satisfiability of this term. Then we conclude that the equational theory of the class $K$ is decidable. Moreover, we use this finite structures to investigate the atomicity of the finitely generated free algebras of the variety $K$. Fix a finite cardinal $m\in\omega$. $\t{m,cyl_n}{}$ denotes the term algebra of type $cyl_n$ generated by
$m$-many free variables. $\f{m}{K}$ denotes the free algebra of the
class $K$ generated by $m$-many generators.
\subsection{Disjunctive normal forms}
we reduce the problem by using disjunctive normal forms in the language of $CTA_n$. Disjunctive normal forms can
provide elegant and constructive proofs of many standard results, c.f., \cite{anderson} and \cite{fine}.
Let $\prod,\sum$ be the grouped versions of $\cdot,+$, respectively. Let $T\subseteq\t{m,cyl_n}{}$ be a finite set
of terms and let $\alpha\in{^T\{-1,1\}}$. Let,
\begin{center}$CT:=\{c_i\tau:i<n, \tau\in T\}$ \text{ } \text{ } and \text{ } \text{ } $T^{\alpha}:=\prod\{\tau^{\alpha}:\tau\in T\}$.\end{center}
Where, for every $\tau\in T$, $\tau^{\alpha}=\tau$ if $\alpha(\tau)=1$ and $\tau^{\alpha}=-\tau$ otherwise. Now,
for every $k\in\omega$, we define a set $F^{n,m}_k\subseteq\t{m,cyl_n}{}$ of normal forms of degree $k$ such that
every normal form contains complete information about the cylindrifications of the normal forms of the first smaller
degree.
\begin{definition}
Set $D_{n,m}=\{d_{ij}:i,j<n\}\cup\{x_0,\ldots,x_{m-1}\}$, where $x_0,\ldots,x_{m-1}$ are the $m$ free
variables that generate $\t{m,cyl_n}{}$. For every $k\in\omega$, we define the followings inductively.
\begin{enumerate}[-]
\item The normal forms of degree $0$, $F^{n,m}_{0}=\{D_{n,m}^{\beta}:\beta\in{^{D_{n,m}}\{-1,1\}}\}$.
\item The set of normal forms of degree $k+1$, \begin{center}$F^{n,m}_{k+1}=\{D_{n,m}^{\beta}\cdot (CF^{n,m}_k)^{\alpha}:\beta\in{^{D_{n,m}}\{-1,1\}}\text{ and }\alpha\in{^{CF^{n,m}_k}\{-1,1\}}\}$.\end{center}
\item The set of all forms, $F^{n,m}=\bigcup_{k\in\omega} F^{n,m}_k$.
\end{enumerate}
\end{definition}
Let $K^{'}$ be the class of all Boolean algebras with operators of type $cyl_n$. The following theorem gives an effective method that allow us to rewrite every term in $\t{m,cyl_n}{}$ as a disjunction of normal forms of the  same degree.
A general version of this theorem is proved in the preprint \cite{amn}.
Also, some similar versions of this theorem were proved in \cite{fine} and \cite{me}.
\begin{theorem}\label{andreka}Let $k\in\omega$. Then the followings are true:
\begin{enumerate}[(i)]
\item\label{andc1} $K^{'}\models\sum F^{n,m}_k=1$.
\item\label{andc2} For every $\tau,\sigma\in F^{n,m}_k$, if $\tau\not=\sigma$ then $K^{'}\models\tau\cdot \sigma=0$.
\item\label{andc4} There exists an effective method (a finite algorithm) to find, for every $\tau\in\t{m,cyl_n}{}$, a non-negative integer $q\in\omega$ and a finite set $S_{\tau}\subseteq F^{n,m}_q$ such that $K^{'}\models\tau=\sum S_{\tau}$.
\end{enumerate}
\end{theorem}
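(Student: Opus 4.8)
The plan is to separate the elementary Boolean content of parts~(i) and~(ii) from the inductive content of part~(iii). For a finite set $S$ of terms and $\gamma\in{}^S\{-1,1\}$ write $S^{\gamma}=\prod\{\tau^{\gamma}:\tau\in S\}$ for the associated minterm. First I would record three facts valid in every Boolean algebra, hence in $K^{'}$: expanding $\prod_{\tau\in S}(\tau+(-\tau))=1$ by distributivity yields $\sum_{\gamma}S^{\gamma}=1$; if $\gamma\neq\gamma^{'}$ they disagree at some $\tau$, so $S^{\gamma}\cdot S^{\gamma^{'}}$ carries the factor $\tau\cdot(-\tau)=0$ and vanishes; and every $s\in S$ satisfies $s=\sum\{S^{\gamma}:\gamma(s)=1\}$, because $s\cdot S^{\gamma}$ equals $S^{\gamma}$ or $0$ according as $\gamma(s)=1$ or $-1$. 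Since $F^{n,m}_0$ is precisely the set of minterms over $D_{n,m}$ and $F^{n,m}_{k+1}$ is precisely the set of minterms over the finite set $D_{n,m}\cup CF^{n,m}_k$ (the product $D_{n,m}^{\beta}\cdot(CF^{n,m}_k)^{\alpha}$ being the minterm indexed by $\beta$ together with $\alpha$), parts~(i) and~(ii) fall out of the first two facts at once.

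For part~(iii) the crucial tool is a refinement lemma: for every $k$, each $\rho\in F^{n,m}_{k+1}$ lies below a unique $\sigma\in F^{n,m}_k$, and therefore $K^{'}\models\sigma=\sum\{\rho\in F^{n,m}_{k+1}:K^{'}\models\rho\leq\sigma\}$ for each $\sigma\in F^{n,m}_k$. I would prove this by induction on $k$. When $k=0$ one only restricts a minterm over $D_{n,m}\cup CF^{n,m}_0$ to its $D_{n,m}$-coordinates. For the inductive step, given $\rho\in F^{n,m}_{k+1}$ I build $\sigma$ above it by keeping the $D_{n,m}$-part of $\rho$ and deciding, for each generator $c_i\mu$ of $\sigma$ with $\mu\in F^{n,m}_{k-1}$, whether to include $c_i\mu$ or $-c_i\mu$: using the inductive refinement $\mu=\sum\{\nu\in F^{n,m}_k:\nu\leq\mu\}$ and the additivity of $c_i$ in $K^{'}$ gives $c_i\mu=\sum\{c_i\nu:\nu\leq\mu\}$, so $\rho\leq c_i\mu$ as soon as $\rho\leq c_i\nu$ for some such $\nu$ (by monotonicity $\nu\leq\mu\Rightarrow c_i\nu\leq c_i\mu$), and otherwise $\rho\leq\prod(-c_i\nu)=-c_i\mu$. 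Uniqueness of $\sigma$ and the displayed equation then follow from the partition properties~(i),(ii). Iterating the lemma lets me lift any finite disjunction of degree-$k$ forms to an equal disjunction of degree-$(k+1)$ forms, and so raise finitely many disjunctions to a common degree.

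With these in hand, part~(iii) is an induction on the construction of $\tau\in\t{m,cyl_n}{}$. The constants $0,1$ and the generators $d_{ij},x_j\in D_{n,m}$ are treated at degree $0$ via the minterm facts, e.g. $x_j=\sum\{D_{n,m}^{\beta}:\beta(x_j)=1\}$. If $\tau=-\sigma$ with $\sigma=\sum S_{\sigma}$ and $S_{\sigma}\subseteq F^{n,m}_q$, then $\tau=\sum(F^{n,m}_q\setminus S_{\sigma})$ because $F^{n,m}_q$ is a partition of unity; for $\tau=\sigma_1+\sigma_2$ or $\tau=\sigma_1\cdot\sigma_2$ I first use the refinement lemma to place both summands at a common degree $q$ and then take the union, respectively the intersection, of their index sets, the intersection being correct since distinct forms annihilate one another. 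The one genuinely new case is $\tau=c_i\sigma$: writing $\sigma=\sum S_{\sigma}$ with $S_{\sigma}\subseteq F^{n,m}_q$ and invoking additivity and normality of $c_i$ gives $c_i\sigma=\sum\{c_i\rho:\rho\in S_{\sigma}\}$, and since each $c_i\rho\in CF^{n,m}_q$ is one of the generators over which the minterms of $F^{n,m}_{q+1}$ are formed, $c_i\rho=\sum\{\eta\in F^{n,m}_{q+1}:\eta\leq c_i\rho\}$; collecting these $\eta$ produces $S_{\tau}\subseteq F^{n,m}_{q+1}$. Each clause outputs an explicit finite set and the recursion halts with the syntax of $\tau$, so the whole procedure is a finite algorithm, with the output degree $q$ bounded by the depth of nested cylindrifications in $\tau$.

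The step I expect to be the main obstacle is the refinement lemma for $k\geq1$. Because the generators $CF^{n,m}_k$ and $CF^{n,m}_{k-1}$ are cylindrifications of forms of \emph{different} degrees, passing between them is not a plain restriction of coordinates and genuinely needs the inductive hypothesis together with the additivity and monotonicity of the operators $c_i$. This is exactly where the inductive design of the normal forms---each form recording complete information about the cylindrifications of the forms one degree lower---is what makes the argument close.
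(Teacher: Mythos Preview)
The paper does not prove this theorem in the text; it only states it and refers the reader to the cited preprint on normal forms and to Fine's paper. Your proposal is therefore not being compared against an in-paper argument, but it is correct and is essentially the standard proof of Fine-style normal forms for Boolean algebras with operators, which is what those references contain. The minterm identities dispose of (i) and (ii) once one notes that $F^{n,m}_{k+1}$ is exactly the set of minterms over the finite set $D_{n,m}\cup CF^{n,m}_k$; for (iii) the refinement lemma together with the structural induction on terms is the expected route, and your treatment of the cylindrification case---observing that $c_i\rho$ for $\rho\in F^{n,m}_q$ is literally one of the generators over which the degree-$(q+1)$ minterms are taken, so that $c_i\rho$ is itself a sum of degree-$(q+1)$ forms---is the key point that makes the recursion close.

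One item is worth making explicit. Both the inductive step of your refinement lemma (for $k\geq 1$) and the $c_i$ clause of the structural induction use that each $c_i$ is finitely additive and normal: you write $c_i\mu=\sum\{c_i\nu:\nu\leq\mu\}$ from $\mu=\sum\{\nu:\nu\leq\mu\}$, and $c_i\sum S_{\sigma}=\sum\{c_i\rho:\rho\in S_{\sigma}\}$. This means you are reading ``Boolean algebras with operators'' in the J\'onsson--Tarski sense, not merely as Boolean algebras equipped with arbitrary extra unary maps. The paper's phrasing supports that reading, but it is the single place where your argument would fail if the operators were unconstrained, so state the hypothesis up front rather than leaving it implicit.
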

Therefore, it is enough to prove that every satisfiable normal form in a member of $K$ is satisfiable in a finite structure member of $K$. Then by theorem \ref{andreka} (\ref{andc4}), one can easily conclude that $K$ is generated by its finite members. The finite algebras we construct here are the complex algebras of some atom structures.
\subsection{Atom Structures}
Let $cat_{n}$ be the relational type with binary relations $T_i$ and unary relations $E_{ij}$, $i,j\in n$.
\begin{definition}
Let $\mathbf{S}=\langle S, T_i, E_{ij}\rangle_{i,j\in n}$ be a model of type $cat_{n}$. The complex algebra over $S$ is defined as follows.
$$\mathfrak{Cm}(\mathbf{S})=\langle \mathcal{P}(S), \cup,\cap,\setminus,\emptyset, S, T^{\star}_i, E_{ij}^{\star}\rangle_{i,j\in n}\in CTA_{n}.$$
Where, $\mathcal{P}(S)$ is the power set of $S$, $\cup,\cap,\setminus$ are the usual boolean set operations and, for every $i,j\in n$, $E_{ij}^{\star}=E_{ij}$ and $T_i^{\star}X=\{y\in S:\exists x\in X\text{ and }(x,y)\in T_i\}$.
\end{definition}
While building the atoms structures, we need to restrict ourself to some conditions to guarantee that the complex algebras of these atom structures are as desired. Consider a model of $cat_{n}$, $\mathbf{S}=\langle S, T_i, E_{ij}\rangle_{i,j\in n}$. One can easily check that $\mathfrak{Cm}(\mathbf{S})\in NCA_{n}$ if and only if $\mathbf{S}\models\{AS1,AS2,AS3,AS5\}$ and $\mathfrak{Cm}(\mathbf{S})\in WCA_{n}$ if and only if $\mathbf{S}\models\{AS1,AS2,AS4,AS5\}$. Where,
\begin{eqnarray*}
AS1 &=& \{T_i\text{ is an equivalence relation on }S: i\in n\}.\\
AS2 &=& \{E_{ii}=S:i\in n\}.\\
AS3 &=& \{E_{ij}=T_k^{\star}(E_{ik}\cap E_{kj}): i,j,k\in n\text{ and }k\not\in\{i,j\}\}.\\
AS4 &=& \{E_{ij}=E_{ji}, E_{ik}\cap E_{kj}\subseteq E_{ij}, E_{ij}=T_k^{\star}E_{ij}: i,j,k\in n\}.\\
AS5 &=& \{T_i\cap {^{2}{E_{ij}}}\subseteq Id : i,j\in n\text{ and }i\not=j\}.
\end{eqnarray*}

Fix a finite number $k\in\omega$ and a normal form $\tau\in F_k^{n,m}$. We construct a finite atom structure $S^{\tau,K}$ whose complex algebra decides the satisfiability of $\tau$. We note that if the term $\tau$ is satisfiable in $K$, then the syntactical construction of $\tau$ guarantees that $\a{Cm}(S^{\tau,K})\in WCA_n$. For $K=NCA_n$, we need to be a little bit careful while considering the forms of degree $0$. Indeed, the syntactical constructions of the forms of degree $0$ are not enough to guarantee condition $AS3$.
\section{Decidability and finite algebra property}
Recall that every form in $F^{n,m}$ is determined by some information given on the diagonals, free variables and the cylindrifications of the forms of the first smaller degree. So, we need to introduce some notions that allow us to handle these information easily.
\begin{definition}
Let $i<n$, $k\in\omega$, $\beta\in{^{D_{n,m}}\{-1,1\}}$ and $\alpha\in{^{CF_k^{n,m}}\{-1,1\}}$. Define
\begin{center}
$sub_i(D_{n,m}^{\beta}\cdot(CF_k^{n,m})^{\alpha}):=\{\sigma\in F_k^{n,m}:\alpha(c_i\sigma)=1\}$, and
\end{center}
\begin{center}$color(D_{n,m}^{\beta}):=color(D_{n,m}^{\beta}\cdot(CF_k^{n,m})^{\alpha}):=\{\sigma\in D_{n,m}:\beta(\sigma)=1\}$.\end{center}
\end{definition}
Recall that $k\in\omega$ is fixed and $\tau\in F_k^{n,m}$. We construct the structure $S^{\tau,K}$ inductively. Pick up a node $u$ and assign the label $L(u)=\tau$. Set $S_0=S_0^0=\cdots=S_0^{n-1}=\{u\}$. Set $T_0^0=\cdots=T_0^{n-1}=\{(u,u)\}$. The labeling assigned to the node $u$ is to indicate that $u$ is responsible for satisfying $\tau$ at the end of the construction. To guarantee this, we need to extend $S_0$ by the information given by $sub_i(\tau)$, $i<n$, as follows. Let $U$ be an infinite set that doesn't contain $u$. For each $i<n$, construct an injective function,
$$\psi_u^i:\{\sigma\in sub_i(\tau):(\forall j\in n\setminus\{i\}) \text{ } d_{ij}\not\in color(\sigma)\cap color(\tau)\}\longrightarrow U,$$ such that $Rng(\psi_u^i)$'s are pairwise disjoint and $U\setminus(Rng(\psi_u^0)\cup\cdots Rng(\psi_u^{n-1}))$ is infinite. For every $i\in n$, set $S^i_{1}=Rng(\psi_u^i)$ and set $S_{1}=S_1^0\cup\cdots\cup S_1^{n-1}$. We extend the labels as follows. For every $i\in n$ and every $v\in Rng(\psi^i_u)$, define $L(v):=(\psi^i_u)^{-1}(v)$. It remains to define $T^i_{1}=\{(v,w):v,w\in Rng(\psi_u^i)\cup\{u\}\}$ for every $i\in n$.

Note that the terms in $sub_i$ whose colors share $d_{ij}$ with $color(\tau)$ (for some $j\in n\setminus\{i\}$) were omitted because $C_7$ implies that $d_{ij}\cdot c_i(d_{ij}\cdot x)\leq x$. It is not hard to see that, under a suitable evaluation, if every element in $S_1$ satisfies its label then $u$ satisfies $\tau$. Hence, we need to extend $S_1$ by adding more elements according to the information carried by the functions $sub_i$, $i<n$, to guarantee that each element of $S_1$ satisfies its label. Let $i\in n$. Note that every two elements $v,w\in S_1^i$ have to be $i$-connected because $T^i_1$ is transitive. Therefore, the information given by $sub_i$ is already guaranteed by the elements in $S_1^i$. So, for the nodes in $S_1^i$ we need to consider the information given by $sub_j$, $j\not=i$, only.

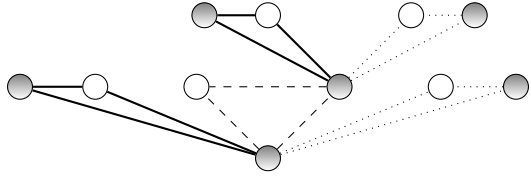
\begin{figure}[!h]
\centering
\begin{tikzpicture}[>=latex'][center]
\node [circle, draw, shade, black] (a) {};
\node [circle, draw, shade, black] (b) [above right=1cm of a]{};
\node [circle, draw, black] (x) [above left=1cm of a] {};
\node [circle, draw, black] (y) [above right=1cm of b] {};
\node [circle, draw, shade, black] (w) [right=0.5cm of y] {};
\node [circle, draw, black] (v) [above left=1cm of b] {};
\node [circle, draw, shade, black] (c) [left=0.5cm of v] {};
\draw [-][dashed] (a) -- (b);
\node [circle, draw, black] (d) [left=1cm of x] {};
\node [circle, draw, shade, black] (h) [left=2cm of x] {};
\node [circle, draw, black] (q) [right=1cm of b] {};
\node [circle, draw, shade, black] (p) [right=2cm of b] {};
\draw [-][thick] (a) -- (h);
\draw [-][thick] (a) -- (d);
\draw [-][thick] (h) -- (d);
\draw [-][dotted] (w) -- (y);
\draw [-][dotted] (b) -- (w);
\draw [-][dotted] (b) -- (y);
\draw [-][thick] (b) -- (v);
\draw [-][thick] (b) -- (c);
\draw [-][thick] (c) -- (v);
\draw [-][dashed] (b) -- (x);
\draw [-][dashed] (a) -- (x);
\draw [-][dotted] (a) -- (q);
\draw [-][dotted] (a) -- (p);
\draw [-][dotted] (p) -- (q);
\end{tikzpicture}
\caption{The relation structure $S^{\tau,K}$}
\label{11}
\end{figure}

More generally, suppose that $k\geq 2$ and $S_l$, $S_l^0,\ldots,S_l^{n-1}$ have been constructed and the labeling $L$ has been extended to cover $S_l$, for some $1\leq l\leq k-1$. For every $i\in n$ and every $v\in S_l\setminus S_l^i$, create an injective function $$\psi_v^i:\{\sigma\in sub_i(L(v)):(\forall m\in n\setminus\{i\}) \text{ } d_{im}\not\in color(\sigma)\cap color(L(v))\}\longrightarrow U\setminus(S_0\cup\cdots\cup S_l),$$
Such that the ranges of all those functions $\psi_v^i$'s are pairwise disjoint and the set $U\setminus(S_0\cup\cdots\cup S_l\cup S_{l+1})$ is still infinite. Where, $S_{l+1}=S_{l+1}^0\cup\cdots\cup S_{l+1}^{n-1}$ and, for every $i\in n$, $S_{l+1}^i=\bigcup\{Rng(\psi_v^i):v\in S_l\setminus S_l^i\}$. We extend the labels as expected. For every $i\in n$, every $v\in S_l\setminus S_l^i$ and every $w\in Rng(\psi^i_v)$, define $L(w):=(\psi^i_v)^{-1}(w)$. Finally, for every $i\in n$, define $T_{l+1}^i=\{(w_1,w_2):w_1,w_2\in Rng(\psi_v^i)\cup\{v\},v\in S_l\setminus S_l^i\}$.

We are almost done, it remains to add some extra nodes to guarantee that $S^{\tau,K}\models AS3$ if $K=NCA_n$ and $\tau$ is satisfiable in $NCA_n$. First, we introduce some notations. For any two sequences $f,g$ of length $n$ and for any $i\in n$, we write $g\equiv_ih$ if and only if $g(j)=h(j)$ for every $j\in n\setminus\{i\}$. Let $f$ be any  sequence of length $n$ and let $i,j,k\in n$. If $k\not\in\{i,j\}$ and $f(i)=f(j)$, then define $C_k^{i,j}f$ to be the sequence which is like $f$ except that its value at $k$ equals to $f(i)$. Otherwise, define $C_k^{i,j}f:=f$.

For every node $v\in S_k$, we say that $v$ is representable if and only if there exists $f=(f_0,\cdots,f_{n-1})$ such that $\mid\{f_0,\ldots,f_{n-1}\}\mid\leq n-1$ and $(\forall i,j\in n) \text{ } (f_i=f_j\iff d_{ij}\in color(L(v)))$. For every representable node $v\in S_k$, pick up such a representation tuple $f_v$ such that $Rng(f_v)\cap Rng(f_w)=\emptyset$ if $v\not=w$. We add some extra part to $S$ as follows. For every representable $v\in S_k$, define $Rep(v)$ to be the smallest subset of $^nRng(f_v)$ that is closed under the operations $C_k^{i,j}$, for every $i,j,k\in n$, and contains the element $f_v$.

If $K=NCA_n$, set $S_{-1}=\bigcup\{Rep(v)\setminus\{f_v\}:v\in S_k\text{ is representable and }K=NCA_n\}$ and
\begin{eqnarray*}
T^{i}_{-1}&=&\{(g,h):g,h\in (Rep(v)\setminus\{f_v\}), v\in S_{k}\text{ is representable and }g\equiv_ih\}\\
&=&\{(g,v):g\in (Rep(v)\setminus\{f_v\}), v\in (S_{k}\setminus S_k^i)\text{ is representable and }g\equiv_if_v\}\\
&=&\{(v,g):g\in (Rep(v)\setminus\{f_v\}), v\in (S_{k}\setminus S_k^i)\text{ is representable and }g\equiv_if_v\},
\end{eqnarray*}
for every $i\in n$. If $K=WCA_n$, set $S_{-1}=T^{0}_{-1}=\cdots=T^{n-1}_{-1}=\emptyset$. We don't need to extend the labels. Define the desired atom structure $S^{\tau,K}$ as follows. $$S^{\tau,K}=\langle S, T_i,E_{ij}\rangle_{i,j\in n},$$
where, for every $i,j\in n$, $S=S_{-1}\cup S_0\cup\cdots\cup S_{n-1}$, $T_i=T^i_{-1}\cup T^i_0\cup\cdots\cup T^i_{k}$ and
$$E_{ij}=\{v\in S\setminus S_{-1}: d_{ij}\in color(L(v))\}\cup \{g\in S_{-1}:g(i)=g(j)\}.$$
Define the evaluation $e^{\tau}:\{x_0,\ldots,x_{m-1}\}\rightarrow\mathcal{P}(S)$ as follows. For every $0\leq i\leq m-1$, let $e^{\tau}(x_i):=\{v\in S\setminus S_{-1}:x_i\in color(L(v))\}$.
\begin{lemma}\label{graph}$\f{m}{K}\models\tau\not=0$ if and only if $\mathfrak{Cm}(S^{\tau,K})\in K$ and
$(\mathfrak{Cm}(S^{\tau,K}),e^{\tau},v)\models L(v)$,
for every $v\in S\setminus S_{-1}$.
\end{lemma}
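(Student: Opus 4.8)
The plan is to prove the two implications separately and to concentrate all of the real work in the forward direction. For the implication from right to left, note that $u\in S\setminus S_{-1}$ carries the label $L(u)=\tau$, so the hypothesis gives $(\Cm(S^{\tau,K}),e^{\tau},u)\models\tau$; hence the subset of $S$ interpreting $\tau$ contains $u$ and is in particular a nonzero element of $\Cm(S^{\tau,K})$. Since $\Cm(S^{\tau,K})\in K$, the assignment $x_i\mapsto e^{\tau}(x_i)$ of the generators extends to a homomorphism $h\colon\f{m}{K}\to\Cm(S^{\tau,K})$, and $h$ sends the element $\tau$ of $\f{m}{K}$ to this nonzero element. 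As a homomorphism cannot map $0$ to a nonzero element, $\tau$ is nonzero in $\f{m}{K}$, i.e. $\f{m}{K}\models\tau\neq0$. This direction uses only the two stated hypotheses, not the internal combinatorics of the construction.

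For the forward direction assume $\f{m}{K}\models\tau\neq0$, i.e. $\tau$ is satisfiable in $K$; I must establish both $\Cm(S^{\tau,K})\in K$ and the truth of every label. By the criterion recorded just before the construction, membership reduces to verifying $\{AS1,AS2,AS5\}$ together with $AS3$ (if $K=NCA_n$) or $AS4$ (if $K=WCA_n$). The key structural observation for $AS1$ is that in each direction $i$ a node is a source of $\psi^{i}$-children exactly when it is not itself an $i$-child, so sources and children play complementary roles and every $T_i$ is a \emph{disjoint} union of the cliques $Rng(\psi^{i}_{v})\cup\{v\}$ (and, over $S_{-1}\cup S_k$, of the $\equiv_i$-cliques inside each $Rep(v)$); hence each $T_i$ is reflexive, symmetric and transitive. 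Axiom $AS2$ and the diagonal clauses of $AS4$ hold because satisfiability of $\tau$ forces every label to be a \emph{consistent} normal form, so that $C_5$ puts $d_{ii}$ positively into every $color(L(v))$ while $WC_6$ makes the diagonal pattern symmetric and $T_k$-closed; and $AS5$ is exactly what the clause ``$d_{im}\notin color(\sigma)\cap color(L(v))$'' in the domains of the $\psi$'s enforces, reflecting $C_7$.

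The delicate axiom is $AS3$ for $K=NCA_n$, and this is precisely the role of $S_{-1}$, repairing the gap in the degree-$0$ forms flagged before the construction. For a representable node $v\in S_k$ the set $Rep(v)$, being closed under the operations $C_k^{i,j}$, contains the tuple $C_k^{i,j}f_v$, which agrees with $f_v$ off the coordinate $k$ (so is $T_k$-related to $v$) and lies in $E_{ik}\cap E_{kj}$, witnessing $E_{ij}\subseteq T_k^{\star}(E_{ik}\cap E_{kj})$; the reverse inclusion is straightforward, and verifying the remaining instances across the whole structure is a careful but routine check. It then remains to prove the truth lemma: $(\Cm(S^{\tau,K}),e^{\tau},v)\models L(v)$ for every $v\in S\setminus S_{-1}$. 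Since labels strictly drop in degree as one descends from $u$, I would induct on the degree of $L(v)$. The degree-$0$ case is immediate, as such labels have no cylindric conjuncts and the Boolean literals hold by the very definitions of the $E_{ij}$ and of $e^{\tau}$; in the inductive step the Boolean part is again automatic, and for each conjunct $c_i\sigma$ the positive case $\sigma\in sub_i(L(v))$ is settled by the child $\psi^{i}_v(\sigma)$ (or, for the diagonal-sharing $\sigma$ omitted from the domain, by $v$ itself through the $C_7$-consequence $d_{ij}\cdot c_i(d_{ij}\cdot x)\leq x$), which is $T_i$-related to $v$ and satisfies $\sigma$ by the induction hypothesis.

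The main obstacle is the negative case: for $\sigma\notin sub_i(L(v))$ I must show that \emph{no} $T_i$-neighbour of $v$ satisfies $\sigma$. By the clique description above the $i$-neighbours of $v$ form a single class, consisting of $v$ together with its $\psi^{i}_v$-children or of $v$ together with its $i$-parent and $i$-siblings; the children and siblings carry labels lying in the relevant $sub_i$-set and of the same degree as $\sigma$, so by the disjointness of distinct normal forms (Theorem~\ref{andreka}(\ref{andc2})) none of them satisfies $\sigma$. The genuinely subtle point is to rule out $v$ itself and, in the child case, the higher-degree parent as spurious witnesses, since these satisfy a form of $\sigma$'s degree only through their reducts; here I would use that consistency of a label $L(w)$ together with $C_2$ (giving $K\models L(w)\leq c_i\rho$ for $\rho$ the relevant reduct) forces that reduct into $sub_i(L(w))$, while the idempotency $c_ic_ix=c_ix$ (from $C_2,C_3$) makes the interpretation of each $c_i\sigma$ a union of $T_i$-classes and thereby forces the $sub_i$-data to cohere along every $i$-edge. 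Reconciling this coherence across the differing degrees of parent and child labels inside one $i$-class is the part I expect to require the most care. Finally, the $S_{-1}$ nodes never interfere with the truth lemma, because $T^{i}_{-1}$ attaches them only to the degree-$0$ nodes of $S_k$, whose labels carry no cylindric conjuncts. Combining the truth lemma with $\Cm(S^{\tau,K})\in K$ yields the right-hand side and closes the forward implication.
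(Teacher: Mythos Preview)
Your backward implication and your verification of $\Cm(S^{\tau,K})\in K$ are fine and in fact more explicit than the paper, which simply calls that direction trivial and reads membership off the construction. The gap is in the truth lemma. You induct on the degree of $L(v)$, i.e.\ from the leaves $S_k$ up to the root $u$, and in the positive case you appeal to the child $\psi^{i}_v(\sigma)$. But for a node $v\in S_l^i$ with $l\geq 1$ there \emph{is no} $\psi^{i}_v$: in the direction $i$ in which $v$ itself was created, the $T_i$-class of $v$ consists of its parent $w\in S_{l-1}$ and its $i$-siblings in $Rng(\psi^{i}_w)\subseteq S_l$. These carry labels of degrees $k-l+1$ and $k-l$ respectively, never smaller than $\deg L(v)=k-l$, so your induction hypothesis says nothing about them; and your assertion that ``siblings carry labels \ldots\ of the same degree as $\sigma$'' is off by one, since the conjunct $c_i\sigma$ in $L(v)$ has $\sigma\in F^{n,m}_{k-l-1}$. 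Both producing a positive witness and excluding spurious ones break down in this back-direction, and your closing remark that this reconciliation ``requires the most care'' is precisely the point that is not yet handled.

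The paper closes exactly this gap by changing the induction variable. It introduces $tag_h(v)$, the unique form in $F^{n,m}_h$ with $\f{m}{K}\models L(v)\leq tag_h(v)$ (taking $tag_h(v)=L(v)$ once $h\geq\deg L(v)$), and proves by induction on $h$ that \emph{every} $v\in S\setminus S_{-1}$ satisfies $tag_h(v)$. At stage $h{+}1$ the conjuncts to be checked are $c_i\sigma$ with $\sigma\in F^{n,m}_h$; for $v\in S_l^i$ one passes to the parent $w$, uses satisfiability of $L(w)$ to find $\sigma''\in sub_i(L(w))\subseteq F^{n,m}_{k-l}$ below $\sigma$, and takes the sibling $z$ with $L(z)=\sigma''$. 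Now the induction hypothesis, which is at degree $h$ and holds for all nodes simultaneously, gives $(\Cm(S^{\tau,K}),e^{\tau},z)\models tag_h(z)=\sigma$. Your coherence observation about $c_ic_ix=c_ix$ is indeed the right ingredient for locating the correct sibling, but without the simultaneous-for-all-nodes, degree-by-degree scheme you cannot conclude that this sibling actually satisfies $\sigma$.
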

\begin{proof}
We prove the non trivial direction. Suppose that $\f{m}{K}\models\tau\not=0$ then $\f{m}{K}\models L(v)\not=0$ for every $v\in S\setminus S_{-1}$. Therefore, by the construction of $S^{\tau,K}$, $S_{-1}$ and $T_{-1}$, one can see that $\mathfrak{Cm}(S^{\tau,K})\in K$. Let $0\leq l\leq k$ and suppose $v\in S_k$, then $L(v)\in F^{n,m}_{k-l}$. Let $0\leq h\leq k$. If $h\geq k-l$, define $tag_h(v)=L(v)$. Otherwise, since $\f{m}{K}\models L(v)\not=0$, there exists unique $\sigma\in F_h^{n,m}$ such that $\f{m}{K}\models L(v)\leq\sigma$. In this case, define $tag_h(v)=\sigma$. It is enough to prove the following. For every $0\leq h\leq k$ and every $v\in S\setminus S_{-1}$.
\begin{equation}\label{tag}
(\mathfrak{Cm}(S^{\tau,K}),e^{\tau},v)\models tag_h(v).
\end{equation}
We use induction on $h$. From the construction of $\mathfrak{Cm}(S^{\tau,K})$ and the choice of the evaluation $e^{\tau}$, it is clear that $\mod{v}{tag_0(v)}$, for every $v\in S\setminus S_{-1}$. Let $0\leq h\leq k-1$ and assume that for every $v\in S\setminus S_{-1}$,
$(\mathfrak{Cm}(S^{\tau,K}),e^{\tau},v)\models tag_h(v)$. Let $0\leq l\leq k$, $i\in n$ and $v\in S_l^i$. If $h+1\geq k-l$, then $(\mathfrak{Cm}(S^{\tau,K}),e^{\tau},v)\models tag_{h+1}(v)=tag_{h}(v)$. Suppose that $h+1< k-l$, then $tag_{h+1}(v)\in F^{n,m}_{h+1}$. Let $j\in n$ be such that $j\not=i$ and let $\sigma\in F_h^{n,m}$.
\begin{itemize}
\item Suppose that $\sigma\in sub_j(tag_{h+1}(v))$. Since $\f{m}{K}\models L(v)\leq tag_{h+1}(v)$, then there exists $\sigma^{'}\in sub_j(L(v))\subseteq F_{k-l-1}^{n,m}$ such that $\f{m}{K}\models \sigma^{'}\leq\sigma$. Suppose that there is no $m\in n\setminus\{j\}$ with $d_{jm}\not\in color(\sigma^{'})\cap color(L(v))$. Then, by the construction of $S^{\tau,K}$, there exists $w\in S_{l+1}^j$ such that $(v,w)\in T_j$ and $L(w)=\sigma^{'}$. By induction, we have $(\mathfrak{Cm}(S^{\tau,K}),e^{\tau},w)\models tag_h(w)=\sigma$. Consequently, $(\mathfrak{Cm}(S^{\tau,K}),e^{\tau},v)\models c_j\sigma$. Suppose that there exists $m\in n\setminus\{j\}$ such that $d_{jm}\in color(\sigma^{'})\cap color(L(v))$. Hence, by the  axiom $C_7$, $\f{m}{K}\models L(v)=L(v)\cdot d_{jm}\leq c_j(\sigma^{'}\cdot d_{jm})\cdot d_{jm}\leq \sigma^{'}\cdot d_{jm}=\sigma^{'}$. By induction we have $(\mathfrak{Cm}(S^{\tau,K}),e^{\tau},v)\models tag_h^{\tau}(v)=\sigma$ and consequently $(\mathfrak{Cm}(S^{\tau,K}),e^{\tau},v)\models c_i\sigma$.

\item Suppose that $\sigma\not\in sub_j(tag_{h+1}(v))$. Assume toward a contradiction that there exists a node $w\in S$ such that $(v,w)\in T_j$ and $(\mathfrak{Cm}(S^{\tau,K}),e^{\tau},w)\models\sigma$. If $w=v$, then we should have $\sigma=tag_h(v)$. Which makes a contradiction with the assumption that $\sigma\not\in sub_j(tag_{h+1}(v))$ and the fact that $\f{m}{K}\models L(v)\not=0$. If $w\not=v$ then, by the construction of $S^{\tau,K}$, there exists $\sigma^{'}\in sub_j(L(v))\subseteq F_{k-l-1}^{n,m}$ such that $L(w)=\sigma^{'}$. Therefore, $\f{m}{K}\models \sigma^{'}\leq \sigma$. Since $\f{m}{K}\models L(v)\leq tag_{h+1}(v)$ and $\sigma^{'}\in sub_j(L(v))$, then $\sigma\in sub_j(tag_{h+1}(v))$. Which contradicts the assumption.
\end{itemize}
Therefore, for every $j\in n\setminus\{i\}$ and every $\sigma\in F_h^{n,m}$,
$$(\mathfrak{Cm}(S^{\tau,K}),e^{\tau},v)\models c_j\sigma\iff\sigma\in sub_j(tag_{h+1}(v)).$$
If $l=0$, then by the same argument above one can easily show that, for every every $\sigma\in F_h^{n,m}$, $(\mathfrak{Cm}(S^{\tau,K}),e^{\tau},v)\models c_i\sigma\iff\sigma\in sub_i(tag_{h+1}(v))$. So assume that $l\not=0$ and let $\sigma\in F_h^{n,m}$ be such that $\sigma\in sub_i(tag_{h+1}(v))$. Then there exists $\sigma^{'}\in sub_i(L(v))$ such that $\f{m}{K}\models\sigma^{'}\leq\sigma$. Let $w$ be the unique node in $S_{l-1}$ with $(w,v)\in T_i$. Since $\f{m}{K}\models L(w)\not=0$, then there exists $\sigma^{''}\in sub_i(L(w))$ such that $\f{m}{K}\models\sigma^{''}\leq \sigma^{'}$. But by the construction there exists a node $z\in S_l^i$ (not necessarily different than $v$) such that $\{(w,z),(v,z)\}\subseteq T_i$ and $L(z)=\sigma^{''}$. By induction we have $(\mathfrak{Cm}(S^{\tau,K}),e^{\tau},z)\models tag_h(z)=\sigma$. Consequently, $(\mathfrak{Cm}(S^{\tau,K}),e^{\tau},v)\models c_i\sigma$. Using this idea and the above method one can prove that for every $\sigma\in F_h^{n,m}$,
$$(\mathfrak{Cm}(S^{\tau,K}),e^{\tau},v)\models c_i\sigma\iff\sigma\in sub_i(tag_{h+1}(v)).$$
By the induction hypothesis, we have $(\mathfrak{Cm}(S^{\tau,K}),e^{\tau},v)\models tag_0(v)$. Therefore, by the  induction principle (\ref{tag}) is true for every $0\leq h\leq k$ and every $v\in S\setminus S_{-1}$, as desired.
\end{proof}
Thus, we have shown that every satisfiable normal form in $K$ has a finite witness in $K$. Therefore, it follows that the equational theory of the class $K$ is decidable and coincide with the equational theory of the finite members of $K$.
\begin{theorem}
The class $K$ is generated by its finite members and has a decidable equational theory.
\end{theorem}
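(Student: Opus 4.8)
The plan is to assemble the two ingredients already in hand—the normal form theorem (Theorem \ref{andreka}) and the finite witness lemma (Lemma \ref{graph})—into one effective decision procedure, and to read off from the very same construction that $K$ and its finite members share a common equational theory. Throughout I use the standard fact that an equation whose variables lie among $x_0,\ldots,x_{m-1}$ holds in the variety $K$ if and only if it holds in the free algebra $\f{m}{K}$, and that in a Boolean algebra with operators $s=t$ is equivalent to $(s\cdot -t)+(-s\cdot t)=0$.

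First I would reduce the question of whether $K\models s=t$ to a satisfiability question for normal forms. Given an equation $s=t$ in $m$ variables, put $\tau=(s\cdot -t)+(-s\cdot t)$, so that $K\models s=t$ exactly when $\f{m}{K}\models\tau=0$. By Theorem \ref{andreka} (\ref{andc4}) one effectively computes $q\in\omega$ and a finite set $S_{\tau}\subseteq F^{n,m}_q$ with $K^{'}\models\tau=\sum S_{\tau}$; since the cylindrifications are normal additive operators in every member of $K$ we have $K\subseteq K^{'}$, so this identity also holds in $\f{m}{K}$. Hence $\f{m}{K}\models\tau\neq 0$ if and only if $\f{m}{K}\models\sigma\neq 0$ for at least one $\sigma\in S_{\tau}$, and the whole problem collapses to deciding, for a single normal form $\sigma$, whether $\f{m}{K}\models\sigma\neq 0$.

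For this last question I would invoke Lemma \ref{graph} with $\tau$ replaced by $\sigma$. The atom structure $S^{\sigma,K}$ is finite and is produced from $\sigma$ by a terminating, purely syntactic recursion: the injections $\psi$, the labels $L$, the predicates $sub_i$ and $color$, and the sets $Rep(v)$ all depend only on the degrees and colors of the forms involved, so $S^{\sigma,K}$ is computable from $\sigma$. Therefore both clauses on the right-hand side of Lemma \ref{graph} are decidable: testing $\mathfrak{Cm}(S^{\sigma,K})\in K$ amounts to checking the finitely many atom-structure conditions (AS1, AS2, AS5 together with AS3 for $NCA_n$ or AS4 for $WCA_n$) on a finite model, while verifying $(\mathfrak{Cm}(S^{\sigma,K}),e^{\sigma},v)\models L(v)$ for each of the finitely many $v\in S\setminus S_{-1}$ is ordinary finite model checking. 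Running this test over every $\sigma\in S_{\tau}$ decides whether $\f{m}{K}\models\tau=0$, hence decides $K\models s=t$; this establishes decidability of the equational theory.

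Finally, the same witnesses yield generation by the finite members. The inclusion $Eq(K)\subseteq Eq(K_{fin})$ is immediate since every finite member lies in $K$. For the converse, suppose $K\not\models s=t$; then $\f{m}{K}\models\tau\neq 0$ for the difference $\tau$ above, so by the reduction some $\sigma\in S_{\tau}$ has $\f{m}{K}\models\sigma\neq 0$, and the forward direction of Lemma \ref{graph} produces a finite algebra $\mathfrak{Cm}(S^{\sigma,K})\in K$ in which the node $u$ carrying $L(u)=\sigma$ satisfies $\sigma$, so $\sigma\neq 0$ there. Because $K^{'}\models\sigma\leq\tau$ and this finite algebra lies in $K\subseteq K^{'}$, the element $\tau$ is also nonzero in it, whence $s=t$ fails in a finite member and $s=t\notin Eq(K_{fin})$. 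I expect no essential obstacle here beyond careful bookkeeping: Lemma \ref{graph} has already absorbed the genuine difficulty, so the one point that really demands attention is confirming effectiveness—namely that $S^{\sigma,K}$ is computable from $\sigma$ and that the two finiteness checks above are algorithmic—both of which hold because every ingredient of the construction is syntactic and every structure it produces is finite.
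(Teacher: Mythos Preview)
Your proposal is correct and follows essentially the same route as the paper: reduce an arbitrary equation to the vanishing of a symmetric difference, decompose that term into normal forms via Theorem~\ref{andreka}(\ref{andc4}), and then use the biconditional of Lemma~\ref{graph} on each normal form both to obtain a decision procedure and to exhibit a finite witness in $K$ whenever the equation fails. The only cosmetic difference is that the paper phrases the decidability step as ``translating the proof of Lemma~\ref{graph} into an algorithm'' (and separately mentions checking representability of nodes in $S_k$), whereas you, more cleanly, simply invoke the right-hand side of Lemma~\ref{graph} as a finitely checkable condition on the finite structure $S^{\sigma,K}$; this is the same argument at bottom.
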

\begin{proof}
Recall that $m\geq 0$ is arbitrary but fixed. Therefore, to show that $K$ is generated by its finite members, we need to show the following. For every $\sigma\in\t{m,cyl_n}{}$,
\begin{equation*}
K\not\models\sigma=0 \Longrightarrow \text{ } (\exists\text{ a finite algebra }\a{A}\in K) \text{ } \text{ } \a{A}\models\sigma\not=0.
\end{equation*}
Let $\sigma\in\t{m,cyl_n}{}$ and suppose that $K\not\models\sigma=0$. Then, by theorem \ref{andreka}(\ref{andc4}), there exists a non-negative $k\in\omega$ and $\tau\in F_k^{n,m}$ such that $K\models 0\not=\tau\leq\sigma$. Consider the atom structure $S^{\tau,K}$ defined above. Note that its complex algebra, $\a{Cm}(S^{\tau,K})$, is a finite member of $K$. By lemma \ref{graph}, we have $\a{Cm}(S^{\tau,K})\models\tau\not=0$. Hence, $\a{Cm}(S^{\tau,K})\models\sigma\not=0$. Therefore, $K$ is generated by its finite members. For the decidability, we need to find a finite algorithm that decides, for every $\sigma_1,\sigma_2\in\t{m,cyl_n}{}$, whether $K\models\sigma_1=\sigma_2$ or not. By the fact that,
$$(\forall\sigma_1,\sigma_2\in\t{m,cyl_n}{}) \text{ } \text{ } K\models\sigma_1=\sigma_2\Longleftrightarrow K\models(\sigma_1\cdot-\sigma_2+-\sigma_1\cdot\sigma_2)=0,$$
it is enough to find a finite algorithm that decides whether $K\models\tau=0$ or not, for every $\tau\in\t{m,cyl_n}$. The proof of lemma \ref{graph} can be translated to a finite algorithm that decides $K\models\tau=0$, for every $\tau\in F^{n,m}$. The desired algorithm is the combination of this algorithm with the finite algorithm given in theorem \ref{andreka} (\ref{andc4}) and an algorithm decides which node in $S_k$ is representable.
\end{proof}
We note that decidability also follows without referring to the actual construction from finite axiomatization and finite model property,
as follows. The equational theory is recursively enumerable by finite axiomatizability, and the non-equations are also recursively
enumerable by finite model property.
\section{Non-atomicity of the free algebras}
From the universal algebra, the free algebras of a variety play an essential role in understanding this variety. In algebraic logic, the free algebras of a variety corresponding to a logic $\mathcal{L}$ are even more important. Indeed, they correspond to the Lindenbaum-Tarski algebras of the logic $\mathcal{L}$. Atoms in the Lindenbaum-Tarski algebras of sentences correspond to finitely axiomatizable complete theories. Thus, non-atomicity of the free algebras is equivalent to weak G\"odel's incompleteness property of the corresponding logic. In this section, we prove the following.
\begin{theorem}In the free algebra $\f{m}{K}$, there is no atom below $t=\prod\{-d_{ij}:i<j<n\}$. Thus, $\f{m}{K}$ is not atomic.
\end{theorem}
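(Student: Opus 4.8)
The plan is to show that no nonzero element below $t$ is an atom; together with $t\neq 0$ (it is realised on $n$ distinct points, e.g.\ via Lemma~\ref{graph} for a satisfiable all-distinct form) this yields non-atomicity, since an atomic Boolean algebra has an atom below every nonzero element. First I would reduce to a single normal form: if $a\leq t$ were an atom, then by Theorem~\ref{andreka}(\ref{andc4}) write $a=\sum S_a$ with $S_a\subseteq F_q^{n,m}$ finite; discarding the forms $\sigma$ with $\f{m}{K}\models\sigma=0$ and using disjointness (Theorem~\ref{andreka}(\ref{andc2})), each remaining form is nonzero and below $a$, so atomicity forces a single survivor and $a=\tau$ for one satisfiable $\tau\in F_q^{n,m}$ with $\tau\leq t$. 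The inequality $\tau\leq t$ says exactly that $color(\tau)$ contains no off-diagonal $d_{ij}$, and the same holds for the label of every node occurring below $\tau$.

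The key structural observation is that \emph{below $t$ the class constraints collapse}. A form with an all-distinct color is not representable (one would need $n$ pairwise distinct values among $f_0,\dots,f_{n-1}$, contradicting $|\{f_0,\dots,f_{n-1}\}|\leq n-1$), so the sets $Rep(v)$ and the whole correcting part $S_{-1}$ are empty on the all-distinct region; moreover $v\notin E_{ij}$ for $i\neq j$ there, so $AS5$ is vacuous and $AS3$/$AS4$ hold trivially (both sides are empty at such nodes). Consequently, when we work only below $t$, the sole surviving requirements on a witness are that each $T_i$ be an equivalence relation and that $E_{ii}=S$: the witness frames are completely free polymodal $\mathbf{S5}$-frames, and here the distinction between $NCA_n$ and $WCA_n$ disappears. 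This is precisely the feature that fails for the commutative algebra $CA_2$, where Henkin's argument yields atomicity.

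It then remains to prove the splitting lemma: for every satisfiable $\tau\in F_k^{n,m}$ with $\tau\leq t$ there are an index $i<n$ and an all-distinct, satisfiable form $\rho$ (of possibly larger degree) with $\f{m}{K}\models\tau\cdot c_i\rho\neq 0$ and $\f{m}{K}\models\tau\cdot(-c_i\rho)\neq 0$; since $c_i\rho$ is a coordinate of the normal forms of the next degree, this writes $\tau$ as a sum of two disjoint nonzero forms, so $\tau$ is not an atom. For the $-c_i\rho$ half I would use $S^{\tau,K}$ itself: the node $u$ has only finitely many $i$-successors, so choosing $\rho$ all-distinct, satisfiable, and not realised by $u$ or by any of these successors makes $(\Cm(S^{\tau,K}),e^{\tau},u)\models\tau\cdot(-c_i\rho)$, with $\Cm(S^{\tau,K})\in K$ by Lemma~\ref{graph}. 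For the $c_i\rho$ half I would attach to $u$ a fresh $i$-successor carrying a copy of the witness $S^{\rho,K}$ (merging the two into one $T_i$-equivalence class) and observe, using the freedom established above, that the enlarged frame is still in $K$ and, again by (the proof of) Lemma~\ref{graph}, realises $\tau\cdot c_i\rho$ at $u$.

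The main obstacle is the choice of $\rho$: one must guarantee both that a satisfiable all-distinct $\rho$ exists which is \emph{not} already realised by the finitely many $i$-successors present in the minimal witness, and that $c_i\rho$ is genuinely independent of $\tau$ rather than forced by it. This is where non-commutativity does the real work: because $c_i$ and $c_j$ need not commute, the all-distinct region admits, at every depth, strictly more pairwise distinguishable alternating types $c_{i}c_{j}c_{i}\cdots$ than any fixed $\tau$ can see through its degree-$(k-1)$ cylindrification data, so a suitable $\rho$ always exists (passing, if necessary, to a degree $k+N$ large enough to separate them). Granting the splitting lemma, every satisfiable $\tau\leq t$ splits, hence no atom lies below $t$, and $\f{m}{K}$ is not atomic.
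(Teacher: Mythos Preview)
Your reduction to a single satisfiable form $\tau\le t$ is fine and matches the paper, but the remainder has genuine gaps.

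First, the sentence ``the same holds for the label of every node occurring below $\tau$'' is false. The nodes of $S^{\tau,K}$ at level $l\ge 1$ are labelled by elements of $sub_i(L(v))$, and these are arbitrary satisfiable forms of degree $k-l$; nothing forces their colors to omit $d_{ij}$. Hence $S_{-1}$ need not be empty and the ``constraints collapse'' picture is not correct in general. What \emph{is} true is that the unique chain $\tau=\tau_0\ge\tau_1\ge\cdots\ge\tau_k$ of forms above $\tau$ (one in each degree) all have the all--distinct color; the paper exploits exactly this chain.

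Second, and more seriously, grafting a copy of $S^{\rho,K}$ onto $u$ by merging $T_i$--classes can destroy $u\models\tau$. After the merge, $u$'s $T_i$--class contains the root $u_\rho$ of $S^{\rho,K}$ \emph{and} every level--$1$ node of $S^{\rho,K}$ that was $T_i$--related to $u_\rho$. Each such node realises some degree--$(k-1)$ form; if even one of these is not in $sub_i(\tau)$, then $u$ now satisfies a forbidden $c_i\sigma$ and $u\not\models\tau$. Symmetrically, the merge enlarges $u_\rho$'s $T_i$--class, so $u_\rho\models\rho$ is no longer guaranteed either. You assert that Lemma~\ref{graph} still applies, but that lemma is about the unaltered structure; once you splice two witnesses together at depth~$1$ you owe a fresh argument, and none is given. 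The closing paragraph concedes that choosing $\rho$ is ``the main obstacle'' and offers only a heuristic about alternating $c_ic_jc_i\cdots$ types; this does not produce a concrete $\rho$, nor does it address the compatibility constraint just described.

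The paper sidesteps all of this by perturbing at the \emph{opposite} end. It walks down an alternating $T_0/T_1$ chain $u=v_0,v_1,\dots,v_k$ inside $S^{\tau,K}$ whose labels are the $\tau_q$ above (hence all all--distinct), and attaches a small diagonal--carrying piece $Rep(v_k)$ at the leaf $v_k$. Since $L(v_k)$ has degree~$0$ (a pure color), new neighbours cannot spoil it; the effect is only visible one degree higher. Comparing the original and extended structures, $v_k$ realises disjoint degree--$1$ forms $\sigma_k,\gamma_k$ (one sees $c_0d_{01}$, the other does not), and a short downward induction along the chain shows that $v_0=u$ realises disjoint degree--$(k{+}1)$ forms $\sigma_0,\gamma_0\le\tau$, so $\tau$ is not an atom. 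Attaching at the leaf rather than the root is precisely what makes the argument go through.
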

\begin{proof}
By theorem \ref{andreka} (\ref{andc4}), it is enough to show that every satisfiable form that is below $t$ in the free algebra $\f{m}{K}$ is not an atom. Fix a finite number $k\in\omega$ and let $\tau\in F_k^{n,m}$ be such that $\f{m}{K}\models 0\not=\tau\leq t$. Recall the structure $S^{\tau,K}$ constructed in the previous section. In few steps, we use $S^{\tau,K}$ to achieve our aim
\begin{description}
\item[Step 1:] We construct a sequences $v_0,\ldots,v_k\in S$ of nodes satisfying the following conditions.
\begin{enumerate}
\item $L(v_q)\in F_{k-q}^{n,m}$ for every $q\in k+1$. In particular, $L(v_0)=\tau$.
\item For every $q\in k$: If $q$ is even then $(v_q,v_{q+1})\in T_0$. If $q$ is odd then $(v_q,v_{q+1})\in T_1$.
\end{enumerate}

To start, let $v_0=u\in S_0$ be the unique node in $S_0$ whose label is $\tau$. If $k=0$, then we are done. Suppose that $k\geq 1$, since $\f{m}{K}\models\tau\not=0$ then there exists unique $\tau_1\in F_{k-1}^{n,m}$ such that $\f{m}{K}\models\tau\leq \tau_1$. Hence, $\tau_1\in sub_0(\tau)$. But, for every $i,j\in n$, if $i\not=j$ then $d_{ij}\not\in color(\tau_1)\cap color(\tau)$. Therefore, by the construction of $S_1$, there exists a unique node $v_1\in S_1$ that satisfies $L(v_1)=\tau_1$ and $(v_0,v_1)\in T_0$. If $k=1$, then we are done. Suppose that $k\geq 2$, since $\f{m}{K}\models\tau_1\not=0$ then there exists unique $\tau_2\in F_{k-2}^{n,m}$ such that $\f{m}{K}\models\tau_1\leq\tau_2$. Consequently, $\tau_2\in sub_1(\tau_1)$ and $d_{ij}\not\in color(\tau_1)\cap color(\tau_2)$, for every $i,j\in n$, $i\not=j$. Let $v_2$ be the unique node in $S_2$ with $L(v_2)=\tau_2$ and $(v_1,v_2)\in T_1$. Continue in this manner, we get the desired sequence.
\item[Step 2:] We extend $S$ to $S_+$ as follows. Without loss of generality we may assume that $k$ is even (if $k$ is odd we can easily modify  what follows to be suitable). Let $f_0,\ldots,f_{n-1}$ be mutually different elements such that each of which is different than all the nodes in $S$. Identify $f:=(f_0,\ldots,f_{n-1})$ with the node $v_k$. Define $Rep(v_k)$ to be the smallest subset of ${^n\{f_0,\ldots,f_{n-1}\}}$ that is closed under the operations $C_k^{i,j}$, $i,j,k\in n$, and contains the element $(f_1,f_1,f_2,\ldots,f_{n-1})$. Define $S^{\tau,K}_+=\langle S_+,T_i^+,E_{ij}^+\rangle_{i,j\in n}$, where
    \begin{eqnarray*}
    S_+&=&S\cup(Rep(v_k)\setminus\{f\})\\
    T_0^+&=&T_0\cup\{(g,h)\in{^2(Rep(v_k)\setminus\{f\})}:g\equiv_0 h\}\\
    T_i^+&=&T_i\cup\{(g,h)\in{^2Rep(v_k)}:g\equiv_i h\},\text{ for every }i\in n\setminus\{0\}\\
    E_{ij}^+&=&E_{ij}\cup\{g\in Rep(v_k):g(i)=g(j)\},\text{ for every }i,j\in n.
    \end{eqnarray*}
    Recall the evaluation $e^{\tau}$ defined with $\a{Cm}(S^{\tau,K})$. By a similar argument to the proof of lemma \ref{graph}, one can see that $\a{Cm}(S_+^{\tau,K})\in K$ and
    $$(\forall v\in S\setminus S_{-1}) \text{ } \text{ } \text{ } (\a{Cm}(S_+^{\tau,K}),e^{\tau},v)\models L(v).$$
\item[Step 3:] For every $q\in k+1$, by theorem \ref{andreka} (\ref{andc1},\ref{andc2}), $F^{n,m}_{k-q+1}$ forms a partition of the unit then there exist unique terms $\sigma_q,\gamma_q\in F_{k-q+1}^{n,m}$ such that $(\a{Cm}(S^{\tau,K}),e^{\tau},v_q)\models \sigma_q$ and $(\a{Cm}(S_+^{\tau,K}),e^{\tau},v_q)\models \gamma_q$. Note that $L(v_q)\in F_{k-q}^{n,m}$. Therefore,
    \begin{equation}\label{seq}\f{m}{K}\models0\not=\sigma_q\leq L(v_q)\text{ } \text{ } \text{ and }\text{ } \text{ } \f{m}{K}\models0\not=\gamma_q\leq L(v_q).\end{equation}
\item[Step 4:] We prove the following. For every $q\in k+1$, $\f{m}{K}\models\sigma_q\cdot\gamma_q=0$.
We use induction on $k-q$. By the extension $Rep(v_k)$, it is clear that $\f{m}{K}\models\sigma_k\leq -c_0d_{01}$ but $\f{m}{K}\models\gamma_k\leq c_0d_{01}$. Hence, $\f{m}{K}\models\sigma_k\cdot\gamma_k=0$. The induction step is going in a similar way. Suppose that $\f{m}{K}\models\sigma_{k-q}\cdot\gamma_{k-q}=0$, for some $q\in k$. Let $i<2$ be such that $i=k-q-1$ $(mod\text{ }2)$. Remember that $(v_{k-q-1},v_{k-q})\in T_i$. Also, remember that $\sigma_{k-q},\gamma_{k-q},L(v_{k-q-1})\in F_{q+1}^{n,m}$. But $\sigma_{k-q}$ and $\gamma_{k-q}$ are disjoint in $\f{m}{K}$, by the induction hypothesis. Hence at least one of $\sigma_{k-q}$ and $\gamma_{k-q}$ is disjoint from $L(v_{k-q-1})$. Without loss of generality, we may assume that $\f{m}{K}\models \sigma_{k-q}\cdot L(v_{k-q-1})=0$. By the construction of $S$ we have, for every node $v\in S\setminus\{v_{k-q-1},v_{k-q}\}$, if $(v,v_{k-q-1})\in T_i$ then $\f{m}{K}\models L(v)\cdot L(v_{k-q})=0$. Therefore, for every $v\in S\setminus\{v_{k-q}\}$, if $(v,v_{k-q-1})\in T_i$ then $(\a{Cm}(S^{\tau,K}),e^{\tau},v)\not\models\sigma_{k-q}$ and $(\a{Cm}(S_+^{\tau,K}),e^{\tau},v)\not\models\sigma_{k-q}$. Remember that $\sigma_{k-q}$ and $\gamma_{k-q}$ were chosen such that $(\a{Cm}(S^{\tau,K}),e^{\tau},v_{k-q})\models\sigma_{k-q}$ and $(\a{Cm}(S_+^{\tau,K}),e^{\tau},v_{k-q})\models\gamma_{k-q}$. Hence,
$$(\a{Cm}(S_+^{\tau,K}),e^{\tau},v_{k-q-1})\models\gamma_{k-q-1}\cdot-c_i\sigma_{k-q}$$ and $$(\a{Cm}(S^{\tau,K}),e^{\tau}, v_{k-q-1})\models\sigma_{k-q-1}\cdot c_i\sigma_{k-q}.$$
Therefore, $\f{m}{K}\models\sigma_{k-q-1}\leq c_i\sigma_{k-q}$ but $\f{m}{K}\models\gamma_{k-q-1}\leq-c_i\sigma_{k-q}$. In other words, $\f{m}{K}\models\sigma_{k-q-1}\cdot\gamma_{k-q-1}=0$. Hence, we are done by the induction principle.
\end{description}
We have shown that there exist two forms $\sigma_0,\gamma_0$ each of which is satisfiable form below $\tau$ inside the free algebra $\f{m}{K}$. We also proved that $\f{m}{K}\models\sigma_0\cdot\gamma_0=0$. Therefore, $\tau$ is not an atom in $\f{m}{K}$ as desired.
\end{proof}
\section*{Acknowledgment} The results in this paper are parts of the authors PhD research, under the supervision of professors Hajnal Andr\'eka and Istv\'an N\'emeti. The author shall like to thank professors Hajnal Andr\'eka and Istv\'an N\'emeti for their great help and valuable comments that made the paper in its present form.
\bibliographystyle{plain}

\end{document}